\newtheorem{theorem}{Theorem}[section]
\newtheorem{lemma}[theorem]{Lemma}
\newtheorem{proposition}[theorem]{Proposition}
\theoremstyle{remark}
\newcommand{\ed}{\end{document}}
\begin{document}

\title[On the length of finite groups and of fixed points]{On the length of finite groups\\ and of fixed points}

\author{E. I. Khukhro}
\address{Sobolev Institute of Mathematics, Novosibirsk, 630\,090,
Russia, \newline and University of Lincoln, Lincoln, U.K.}
\email{khukhro@yahoo.co.uk}
\thanks{This work was supported by CNPq-Brazil. The first author thanks  CNPq-Brazil and the University of Brasilia for support and hospitality that he enjoyed during his visits to Brasilia.}

\author{P. Shumyatsky}
\address{Department of Mathematics, University of Brasilia, DF~70910-900, Brazil}
\email{pavel@unb.br}

\keywords{Finite groups, nonsoluble length, generalized Fitting height, coprime automorphism group}
\subjclass{Primary 20D45}

\begin{abstract}
The generalized Fitting height of a finite group $G$ is
the least number $h=h^*(G)$ such that $F^*_h(G)=G$, where the $F^*_i(G)$ is the generalized Fitting series: $F^*_1(G)=F^*(G)$ and $F^*_{i+1}(G)$ is the inverse image of $F^*(G/F^*_{i}(G))$. It is proved that if $G$ admits a soluble group of automorphisms $A$ of coprime order, then $h^*(G)$ is bounded in terms of $h^* (C_G(A))$, where $C_G(A)$ is the fixed-point subgroup, and the number of prime factors of $|A|$ counting multiplicities. The result follows from the special case when $A=\langle\varphi\rangle$ is of prime order, where it is proved that $F^*(C_G(\varphi ))\leqslant F^*_{9}(G)$.

The nonsoluble length $\lambda (G)$  of a finite group $G$ is defined as the minimum 
number of nonsoluble factors in a normal series each of whose factors either is soluble or is a direct product of nonabelian simple groups. It is proved that if $A$ is a group of automorphisms of $G$ of coprime order, then $\lambda (G)$ is bounded in terms of $\lambda (C_G(A))$  and the number of prime factors of $|A|$ counting multiplicities.
\end{abstract}

\maketitle

\section{Introduction}

The structure of an arbitrary finite group $G$ can be described in terms of length parameters related to normal series with `nice' sections. One way of constructing such a series is based on the generalized Fitting subgroup $F^*(G)$. Recall that $F^*(G)$ is the product of the Fitting subgroup $F(G)$ and all subnormal quasisimple subgroups; here a group is quasisimple if it is perfect and its  quotient by the centre is a non-abelian simple group. Then the \textit{generalized Fitting series} of $G$ is defined starting from  $F^*_1(G)=F^*(G)$, and then by induction, $F^*_{i+1}(G)$ being  the inverse image of $F^*(G/F^*_{i}(G))$. The least number $h$ such that $F^*_h(G)=G$ is naturally defined as the \textit{generalized Fitting height} $h^*(G) $ of $G$. Clearly, if $G$ is soluble, then $h^*(G)=h(G)$ is the ordinary Fitting height of $G$. Bounding the generalized Fitting height of a finite group $G$ greatly facilitates using the classification of finite simple groups (and is itself often obtained using the classification). One of such examples is the reduction of the Restricted Burnside Problem to soluble and nilpotent groups in  the Hall--Higman paper \cite{ha-hi}, where the generalized Fitting height was in effect bounded for groups of given exponent (using the classification as a conjecture at the time). A similar example is Wilson's reduction of the problem of local finiteness of periodic profinite groups to pro-$p$ groups in \cite{wil83}.

Another useful, more rough, length parameter is
the \textit{nonsoluble length} $\lambda (G)$  of a finite group $G$, which is defined as the minimum 
number of nonsoluble factors in a normal series each of whose factors either is soluble or is a direct product of nonabelian simple groups. More precisely, consider normal series
$$
1= G_0\leqslant G_1\leqslant \dots \leqslant G_{2h+1}=G
$$
in which  for $i$ even  the factor $G_{i+1}/G_{i}$ is soluble (possibly trivial), and for $i$ odd   the factor $G_{i+1}/G_{i}$   is a (non-empty) direct product of nonabelian simple groups; then the nonsoluble length $\lambda (G)$ is equal to the minimum possible value of $h$. (In particular, the group is soluble if and only if its nonsoluble length is $0$.) Bounding the nonsoluble length was one of the steps in Wilson's paper \cite{wil83}; more recently we used this parameter in the study of both finite and profinite groups in \cite{khu-shu131,khu-shu132}.

In the present paper we consider the  generalized Fitting height and nonsoluble length of a finite group $G$  relative to the same parameters of the fixed-point subgroup $C_G(A)$ of a  soluble group of automorphisms $A$ of coprime order. The results are modelled on Thompson's paper \cite{tho64}, where in the case of soluble groups $G$, $A$ of coprime orders, a bound for the Fitting height of $G$ was obtained in terms of the Fitting height of the fixed-point subgroup $C_G(A)$ and the number of prime factors of $|A|$ counting multiplicities.

\begin{theorem}\label{t1}
Suppose that a finite group $G$ admits a soluble group of automorphisms $A$ of coprime order. Then its generalized Fitting height  $h^* (G)$ is bounded in terms of the generalized Fitting height $h^*(C_G(A))$ of the fixed-point subgroup $C_G(A)$ and the number of prime factors of $|A|$ counting multiplicities.
\end{theorem}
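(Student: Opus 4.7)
The proof splits into two parts: a reduction to the case where $A$ has prime order, and the core estimate $F^*(C_G(\varphi)) \leqslant F^*_9(G)$ from the abstract.

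\textbf{Reduction to prime order.} I would induct on $\alpha = \alpha(A)$, the number of prime divisors of $|A|$ counted with multiplicity. For $\alpha \geqslant 2$, pick a normal subgroup $B$ of $A$ of prime index, which exists since $A$ is soluble. Then $B$ and $A/B$ have orders coprime to $|G|$, the subgroup $C_G(B)$ is $A$-invariant, and a standard coprime-action identity gives $C_{C_G(B)}(A/B) = C_G(A)$. The prime-order case applied to the action of $A/B$ on $C_G(B)$ bounds $h^*(C_G(B))$ in terms of $h^*(C_G(A))$, and the inductive hypothesis applied to the action of $B$ on $G$ bounds $h^*(G)$ in terms of $h^*(C_G(B))$ and $\alpha - 1$. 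Composing the two bounds completes the inductive step.

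\textbf{From the key estimate to the prime-order case.} Assuming the estimate $F^*(C_G(\varphi)) \leqslant F^*_9(G)$ holds whenever $\varphi$ has prime order coprime to $|G|$, I would argue by induction on $k = h^*(C_G(\varphi))$. Set $\bar G = G/F^*_9(G)$; coprimality of $|\varphi|$ and $|G|$ gives $C_{\bar G}(\bar\varphi) = \overline{C_G(\varphi)}$, and the containment $F^*(C_G(\varphi)) \leqslant F^*_9(G)$ means this fixed-point subgroup is a quotient of $C_G(\varphi)/F^*(C_G(\varphi))$, hence has generalized Fitting height at most $k-1$. Induction bounds $h^*(\bar G)$ in terms of $k-1$, and adding the nine layers absorbed by $F^*_9(G)$ bounds $h^*(G)$ in terms of $k$.

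\textbf{Proof plan for the key estimate.} Working with a minimal counterexample, I would analyze $F^*(G) = F(G) E(G)$ and the action of $\varphi$ on the successive terms of the generalized Fitting series. For the soluble contributions, a Thompson-type estimate shows that a coprime automorphism of prime order acting on a soluble section adds only a bounded number of Fitting steps between the fixed-point subgroup and the whole section. For the layer, $\varphi$ permutes the components: components in nontrivial orbits give only diagonal fixed-point subgroups that already lie in $E(G)$, while $\varphi$-fixed components $S$ require knowing the structure of $C_S(\varphi)$ for an outer automorphism of prime order of a nonabelian finite simple group, which the classification supplies together with the Schreier property of $\mathrm{Out}(S)$. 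Climbing stepwise up $F^*_1(G) \leqslant F^*_2(G) \leqslant \dots$ and absorbing the $\varphi$-fixed contribution of each layer into a bounded number of further terms, the constant $9$ arises from a careful audit of the costs at each step.

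\textbf{Main obstacle.} The substantive content lies in the key estimate, and the principal difficulty is obtaining an absolute constant independent of the simple composition factors of $G$; this forces the proof to lean on the classification of finite simple groups and on detailed information about centralizers of outer automorphisms of prime order in simple groups. A secondary difficulty is preventing the constant from growing as one climbs the generalized Fitting series: one needs uniform per-step bounds rather than an iterated estimate whose constant accumulates with $h^*(G)$.
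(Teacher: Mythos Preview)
Your reduction to the prime-order case and your derivation of the prime-order bound from the key estimate $F^*(C_G(\varphi))\leqslant F^*_9(G)$ are both correct and essentially what the paper does. The paper phrases the latter as a direct iteration --- $C_G(\varphi)\leqslant F^*_{9h}(G)$ for $h=h^*(C_G(\varphi))$, after which $G/F^*_{9h}(G)$ is nilpotent by Thompson's fixed-point-free theorem --- rather than an induction on $k$, but the two unwind to the same thing; just note that your base case $k=0$ also needs Thompson's fixed-point-free theorem, which you do not mention.

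Where your proposal parts ways with the paper is in the plan for the key estimate. The paper does \emph{not} climb the generalized Fitting series of $G$ layer by layer. Instead: let $L/S(G)=F^*(G/S(G))$ and let $K$ be the kernel of the permutation action of $G$ on the simple factors of $L/S(G)$. A short argument, using Schreier together with the Wang--Chen theorem that a coprime automorphism of a nonabelian simple group never has nilpotent fixed-point subgroup (this is the only classification input --- no finer structural information about $C_S(\varphi)$ is used), shows $F^*(C_G(\varphi))\leqslant K$, so one may assume $G=K$. Now fix $x\in F^*(C_G(\varphi))$ and set $X=\langle x^G\rangle$. A Baer-type lemma gives $x\in F(C_H(\varphi))$ where $H=S(X)\langle x\rangle$ is soluble, and Thompson's soluble theorem yields $x\in F_4(H)$, hence $[x,S(X)]\leqslant F_4(G)$. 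Passing to $G/F_4(G)$ forces $S(\bar X)\leqslant Z(\bar X)$, so $\bar X/F^*(\bar X)$ is soluble and the image of $X$ in $G/F^*_5(G)$ is soluble. A second application of Thompson's theorem to that soluble image gives $X\leqslant F^*_9(G)$; the constant $9$ is precisely $4+1+4$.

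So the mechanism is a two-pass use of Thompson's soluble bound on the normal closure of a single element, sandwiching one generalized-Fitting layer --- not a cumulative layer-by-layer ascent. You have the right ingredients listed, but your ``climbing stepwise'' sketch does not yet contain the idea that controls the constant, and as written it does not explain how the bound stays independent of $h^*(G)$, which you yourself flag as the main difficulty.
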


In the proof we use the underlying result of Thompson \cite{tho64} for soluble groups in the special case when $A=\langle\varphi\rangle$ is of prime order: then  $F(C_G(\varphi ))\leqslant F_4(G)$. In fact, we also obtain an analogous result for  the generalized Fitting height.

\begin{theorem}\label{t2}
Suppose that a finite group $G$ admits an automorphism $\varphi $ of prime order coprime to $|G|$. Then $F^*(C_G(\varphi ))\leqslant F^*_{9}(G)$.
\end{theorem}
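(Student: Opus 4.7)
I would argue by induction on $|G|$ and reduce to a minimal counterexample, using throughout the standard coprime identity $C_{G/N}(\varphi )=C_G(\varphi )N/N$ for every $\varphi $-invariant normal subgroup $N\triangleleft G$. This identity together with the fact that $\pi (F^*(H))\leqslant F^*(H/K)$ for any quotient $\pi\colon H\to H/K$ (which is immediate for the Fitting part and, for the layer, follows because central quotients of quasisimple groups are again quasisimple or trivial) lets us transfer bounds between $G$ and its $\varphi $-invariant quotients and thereby work inside individual factors of the generalized Fitting series.

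The next step is to split $F^*(C_G(\varphi ))=F(C_G(\varphi ))\cdot E(C_G(\varphi ))$ and bound the Fitting piece and the layer piece separately inside $F^*_9(G)$. For the Fitting piece I would exploit Thompson's result cited in the excerpt, namely $F(C_S(\varphi ))\leqslant F_4(S)$ for soluble $\varphi $-invariant sections $S$. By collapsing the layer parts of successive generalized Fitting factors of $G$, one obtains $\varphi $-invariant soluble sections to which Thompson applies; his constant $4$ then contributes a block of generalized Fitting levels of $G$ to the overall bound.

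For the layer piece I would analyse each component $L$ of $C_G(\varphi )$, which is a $\varphi $-centralized quasisimple subgroup of $G$ but is in general \emph{not} subnormal in $G$ (the paradigm example being the diagonal in $S\times S$ with $\varphi $ swapping the factors). In a suitable $\varphi $-invariant quotient of $G$, the image of $L$ must lie in the layer of that quotient or be a diagonal of a $\varphi $-orbit of simple direct factors; both options can be read off once the appropriate factor of the generalized Fitting series of $G$ is known to be semisimple. To pass through the soluble factors that intervene between consecutive semisimple layers I would use Schreier's conjecture (solvability of $\mathrm{Out}(S)$ for $S$ simple) together with Thompson's theorem again, so that the component $L$ is forced into a bounded number of generalized Fitting levels.

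\textbf{Main obstacle.} The principal difficulty is the interaction between the Fitting and layer contributions along the generalized Fitting series: a naive sum of Thompson's bound with a layer bound overcounts, because Fitting pieces of $C_G(\varphi )$ may arise from quotients sitting above components, and vice versa. The constant $9$ should emerge from a careful bookkeeping argument, interleaving Thompson's bound $4$ with the Schreier step to show that a fixed-point quasisimple subgroup cannot climb more than a fixed number of generalized Fitting levels of $G$; pinning down this accounting (rather than the soluble input, which is essentially Thompson, or the component input, which is essentially Schreier) is where the real work lies.
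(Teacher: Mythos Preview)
Your plan is a reasonable outline, but it does not reach a proof: the two technical devices that actually make the argument go through are missing, and without them the ``bookkeeping'' you flag as the main obstacle cannot even be set up.

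First, you never explain why $F^*(C_G(\varphi))$ cannot permute the simple subnormal factors of $G/S(G)$. Your discussion of the layer $E(C_G(\varphi))$ mentions only the diagonal-in-a-$\varphi$-orbit scenario, but the real danger is an element $x\in F^*(C_G(\varphi))$ (possibly with $x\in F(C_G(\varphi))$) that moves one $\varphi$-fixed simple factor $S_i$ to another $\varphi$-fixed $S_j$. The paper eliminates this via the Wang--Chen theorem that a coprime automorphism of a nonabelian simple group never has nilpotent fixed points: this forces $F^*(C_G(\varphi))\leqslant K$, the kernel of the permutation action on the simple factors, and lets one assume $G=K$. Schreier alone does not give this, and splitting into $F$ and $E$ does not avoid the issue.

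Second, your scheme of ``collapsing layer parts to obtain soluble sections and applying Thompson'' is not made precise, and the paper's mechanism is quite different. It does \emph{not} split $F^*(C_G(\varphi))=F\cdot E$ and treat the pieces separately; instead, for each $x\in F^*(C_G(\varphi))$ it passes to the normal closure $X=\langle x^G\rangle$ and the soluble subgroup $H=S(X)\langle x\rangle$. A short Baer-type lemma (if $N$ is a soluble normal subgroup of a group $T$ and $x\in F^*(T)$, then $x\in F(N\langle x\rangle)$), applied inside $C_G(\varphi)$ with $N=C_{S(X)}(\varphi)$, gives $x\in F(C_H(\varphi))$; Thompson then yields $x\in F_4(H)$ and hence $[x,S(X)]\leqslant F_4(G)$. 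Modulo $F^*_5(G)$ the group $X$ becomes soluble (one $F^*$-level, using $G=K$ and Schreier), and a second application of Thompson to this now-soluble $X$ finishes. The constant $9=4+1+4$ thus comes from two invocations of Thompson separated by a single $F^*$-step, not from an interleaving of Fitting and layer contributions along the generalized Fitting series as you propose.
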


The proof of Theorem~\ref{t2} uses the following consequences of the classification:  the validity of Schreier's conjecture on solubility of outer automorphism groups of non-abelian finite simple groups and the result of Wang and Chen \cite{wa-ch}
that in a finite nonabelian simple group the fixed-point subgroup of an automorphism of
coprime order cannot be a nilpotent group.
Theorem~\ref{t1} follows from Theorem~\ref{t2} by a  straightforward induction, which furnishes the bound
$h^*(G) \leqslant 9^{\alpha (|A|)}\cdot h^*(C_G(A))+(9^{\alpha (|A|)}-1)/8$, 
where $|A|$ is the product of $ \alpha (|A|)$ primes.  We have no reason to believe that this bound is anywhere near a sharp one. It is worth mentioning that in the soluble case the original bound of Thompson $h(G) \leqslant 5^{\alpha (|A|)}\cdot h(C_G(A))$  (and $h(G) \leqslant 5^{\alpha (|A|)}$ if $ h(C_G(A))=0$ when $C_G(A)=1$) 
was later drastically improved by various authors, with first linear bound obtained by Kurzweil \cite{kur}, and definitive sharp bound by Turull \cite{tu}. We hope that similar improvements can be made for the bound in Theorem~\ref{t1}.

We now state a similar  result for the nonsoluble length.

\begin{theorem}\label{t3}
Suppose that a finite group $G$ admits a group of automorphisms $A$ of coprime order. Then its  nonsoluble length $\lambda (G)$ is bounded in terms of the nonsoluble length $\lambda (C_G(A))$ of the fixed-point subgroup $C_G(A)$ and the number of prime factors of $|A|$ counting multiplicities.
\end{theorem}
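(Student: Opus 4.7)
The plan is to mimic the approach of Theorem~\ref{t1}: reduce to the case when $A=\langle\varphi\rangle$ has prime order by a Thompson-style induction on $\alpha(|A|)$, and then bound $\lambda(G)$ in terms of $\lambda(C_G(\varphi))$ in that special case. Since $A$ is soluble, pick a normal subgroup $B\lhd A$ of prime index, so $A/B=\langle\varphi B\rangle$ is cyclic of prime order; both $B$ and a coset representative $\varphi$ act coprimely on $G$. The fixed-point subgroup $C_G(B)$ is $\varphi$-invariant, and Glauberman's lemma for coprime action gives $C_{C_G(B)}(\varphi)=C_G(A)$. The outer induction applied to $B$ bounds $\lambda(G)$ in terms of $\lambda(C_G(B))$, while the prime-order case applied to $\varphi$ acting on $C_G(B)$ bounds $\lambda(C_G(B))$ in terms of $\lambda(C_G(A))$, closing the induction.

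For the prime-order case, with $\varphi$ of prime order $p$ coprime to $|G|$, I would work with the characteristic nonsoluble series $L_0=1$, $L_{2i+1}/L_{2i}=R(G/L_{2i})$, $L_{2i+2}/L_{2i+1}=\mathrm{soc}(G/L_{2i+1})$, so that $\lambda(G)$ equals the least $h$ with $L_{2h+1}=G$. Each $L_i$ is characteristic and hence $\varphi$-invariant, and coprime action identifies $(C_G(\varphi)\cap L_{i+1})/(C_G(\varphi)\cap L_i)\cong C_{L_{i+1}/L_i}(\varphi)$. The crux is to analyse the semisimple factors $L_{2i+2}/L_{2i+1}=\prod_j S_j$: whenever $\varphi$ cyclically permutes $p$ of the simple factors the diagonal supplies a non-abelian simple subgroup inside $C(\varphi)$, and whenever $\varphi$ stabilises a simple factor $S_j$ the centralizer $C_{S_j}(\varphi)$ is non-nilpotent (hence non-trivial) by the Wang--Chen theorem \cite{wa-ch}. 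Stringing these contributions together should show that each semisimple layer of $G$ eventually pushes up the nonsoluble length of $C_G(\varphi)$.

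The principal difficulty is the case in which $\varphi$ stabilises every simple factor of some semisimple layer and each $C_{S_j}(\varphi)$ happens to be soluble (though non-nilpotent), as occurs, for instance, for the order-$3$ field automorphism of $\mathrm{Sz}(8)$ with fixed-point subgroup $\mathrm{Sz}(2)\cong F_{20}$. In this situation the corresponding factor of $C_G(\varphi)$ is soluble and does not by itself contribute a nonsoluble layer. To overcome this, one must either aggregate a bounded number of consecutive semisimple layers of $G$ and show that the combined $\varphi$-centralizer contains a non-abelian composition factor, or invoke the authors' earlier work \cite{khu-shu131,khu-shu132} on nonsoluble length, which already furnishes a bound of the form $\lambda(G)\leq c_1\lambda(C_G(\varphi))+c_2(p)$ for prime-order coprime actions. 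Either route closes the prime-order case, and combined with the reduction yields the desired bound on $\lambda(G)$ in terms of $\lambda(C_G(A))$ and $\alpha(|A|)$.
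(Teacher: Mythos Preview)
Your outer induction on $\alpha(|A|)$ is correct and matches the paper (one small point: the theorem does not assume $A$ is soluble; the paper first observes that if $G$ is nonsoluble then $|A|$ is odd, so $A$ is soluble by Feit--Thompson).

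The prime-order case, however, has a genuine gap. The difficulty you isolate --- that $\varphi$ might fix every simple factor of some semisimple layer and leave only a soluble centralizer in each --- is precisely the obstacle, but neither of your proposed fixes works. ``Aggregating a bounded number of layers'' is not a proof; and the papers \cite{khu-shu131,khu-shu132} do not contain a bound $\lambda(G)\leq c_1\lambda(C_G(\varphi))+c_2(p)$ for coprime prime-order actions --- that is exactly what is being proved here, so invoking it would be circular.

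The paper sidesteps your difficulty entirely by a reduction you have missed: first replace $G$ by $[G,\varphi]$. Since $G=[G,\varphi]C_G(\varphi)$ with $[G,\varphi]\trianglelefteqslant G$, one has $\lambda(G)\leq \lambda([G,\varphi])+\lambda(C_G(\varphi))$, so it suffices to bound $\lambda(G)$ under the extra hypothesis $G=[G,\varphi]$. This hypothesis forces $\varphi$ to have a nontrivial orbit on the simple factors of \emph{every} semisimple layer $U_k=L_k/M_k$ with $k\leq m-1$: if $\varphi$ fixed all of them, then $[G,\varphi]$ would lie in the (soluble-by-$L_k$) kernel of the permutation action, contradicting nonsolubility of $G/L_k$. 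Hence the Wang--Chen result is not used at all in this theorem, and the troublesome ``all factors fixed, soluble centralizers'' scenario simply does not arise.

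What remains --- and this is the real technical content, which your sketch does not address --- is to show that the diagonal subgroups $D_k\leq C_G(\varphi)$ coming from nontrivial $\varphi$-orbits at levels $k=1,\dots,m-1$ force the nonsoluble series of $C_G(\varphi)$ to have at least $m-1$ nonsoluble layers. This is not automatic: one must choose lifts $\hat D_k\leq C_G(\varphi)$ carefully so that each $\hat D_k$ moves a nontrivial $\varphi$-orbit at level $k-1$ (the paper's Lemma~\ref{l-acts}, whose proof is a somewhat delicate minimal-preimage argument), and then argue inductively that $\hat D_k\not\leq \mu_k M_k$. The outcome is $\lambda(G)\leq \lambda(C_G(\varphi))+1$ when $G=[G,\varphi]$, giving $\lambda(G)\leq 2\lambda(C_G(\varphi))+1$ in general and $\lambda(G)\leq 2^{\alpha}(\lambda(C_G(A))+1)-1$ after the induction.
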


The proof of Theorem~\ref{t3} follows from the main case  where  $A=\langle \varphi\rangle$ is of prime order and $G=[G,\varphi ]$, where we prove that $\lambda (G)\leqslant \lambda (C_G(\varphi ))+1$. Straightforward induction then gives  the bound   $\lambda (G)\leqslant 2^{\alpha (|A|)}(\lambda (C_G(A))+1)-1$ in Theorem~\ref{t3}.
The proof uses the classification  in as much as the validity of Schreier's conjecture on solubility of outer automorphism groups of non-abelian finite simple groups.

The coprimeness conditions are unavoidable in all our theorems. For Theorems~\ref{t1} and \ref{t2} this was shown even for soluble groups by examples in Thompson's paper \cite{tho64}. For Theorem \ref{t3} one can take a repeated wreath product  $G=S\wr (S\wr  (\dots (S\wr S)\dots ))$ of a simple group $S$ containing an element $a$ of prime order with soluble centralizer $C_S(a)$. Let $b\in S$ be a conjugate of $a$ different from $a$. Then the element $(b,a,a,\dots ,a)$ in the base of the last wreath product has soluble  centralizer in $G$, while the nonsoluble length of $G$ is unbounded.

\medskip
Throughout the paper we use without special references the well-known
property of coprime actions:  if a group  $A$ acts by automorphisms on a finite
group $G$ of coprime order, $(|A|,|G|)=1$, then $ C_{G/N}(A)=C_G(A)N/N$ for any $A$-invariant normal subgroup $N$.
 We usually use the same letter to denote induced automorphisms of invariant sections.

\section{Generalized Fitting height}
\label{s-f}

In this section we obtain bounds for the generalized Fitting height in terms of the generalized Fitting height of the fixed-point subgroup of a group of automorphisms.  Recall that the generalized Fitting subgroup $F^*(G)$ of a finite group $G$ is the   product of the Fitting subgroup $F(G)$ and the characteristic subgroup $E(G)$, which is a central product of all subnormal quasisimple subgroups of $G$, that is, $E(G)=\prod Q_i$ over all $Q_i$ such that $Q_i$ is subnormal in $G$,  $Z(Q_i)\leqslant [Q_i,Q_i]$, and $S_i=Q_i/Z(Q_i)$ is a non-abelian simple group.  Then $[F(G),E]=1$ and $E(G)/Z(E(G))\cong F^*(G)/F(G)$ is the direct product of the $S_i$. Acting by conjugation, the group $G$ permutes the factors $Q_i$ and $C_G(F^*(G))\leqslant F(G)$. The following fact  (see, for example, \cite[Lemma~2.1]{khu-shu131}) is a well-known consequence of  Schreier's conjecture on solubility of outer automorphism groups of non-abelian finite simple groups confirmed by the classification.

\begin{lemma}\label{l21}
Let  $L/S(G)=F^*(G/S(G))$ be the generalized Fitting subgroup of the quotient by the soluble radical $S(G)$ of a finite group $G$, and let $K$ be the kernel of the permutational action of $G$ on the set of subnormal simple factors of $L/S(G)$. Then $K/L$ is soluble. \qed
\end{lemma}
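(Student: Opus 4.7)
The plan is to reduce the statement to Schreier's conjecture by working in the quotient $\bar G = G/S(G)$, where the soluble radical is trivial. Since $F(\bar G) \leqslant S(\bar G) = 1$, we get $F^*(\bar G) = E(\bar G) = L/S(G)$, and moreover $Z(E(\bar G)) \leqslant F(\bar G) = 1$, so the generalized Fitting subgroup is genuinely a direct product $L/S(G) = S_1 \times \dots \times S_n$ of non-abelian simple groups (not merely a central product). The set of these factors is exactly the set of subnormal simple factors on which $G$ acts by permutations in the statement.

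Next I would analyze the action of $\bar K := K/S(G)$ on $L/S(G)$. By the definition of $K$, the group $\bar K$ stabilizes each $S_i$ setwise, so conjugation yields a homomorphism
\[
\Phi\colon \bar K \longrightarrow \mathrm{Aut}(S_1)\times\dots\times\mathrm{Aut}(S_n),
\]
and $\Phi(L/S(G)) = \mathrm{Inn}(S_1)\times\dots\times\mathrm{Inn}(S_n)$. The kernel of $\Phi$ is $C_{\bar K}(L/S(G))$, which lies inside $C_{\bar G}(F^*(\bar G))$; by the standard self-centralizing property $C_{\bar G}(F^*(\bar G)) \leqslant F^*(\bar G) = L/S(G)$, and since each $S_i$ is centreless this forces $\ker\Phi = 1$.

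Hence $\Phi$ is an embedding, and passing to the quotient by $L/S(G)$ gives an embedding
\[
K/L \;\cong\; \bar K \big/ (L/S(G)) \;\hookrightarrow\; \mathrm{Out}(S_1)\times\dots\times\mathrm{Out}(S_n).
\]
Invoking Schreier's conjecture (a consequence of the classification), each $\mathrm{Out}(S_i)$ is soluble, so their direct product is soluble, and therefore so is $K/L$.

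There is no serious obstacle here; the only points requiring care are the initial reduction observation that modding out by $S(G)$ turns $F^*$ into a direct product of centreless simple groups, and the clean application of the self-centralizing property of $F^*$ to identify $\ker\Phi$. The rest is formal and reduces to the cited CFSG consequence.
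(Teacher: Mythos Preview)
Your argument is correct and is exactly the standard proof: reduce to trivial soluble radical, observe that $F^*(\bar G)=E(\bar G)=S_1\times\dots\times S_n$ is a direct product of centreless non-abelian simple groups, use $C_{\bar G}(F^*(\bar G))\leqslant F(\bar G)=1$ to embed $K/L$ into $\prod_i\mathrm{Out}(S_i)$, and invoke Schreier's conjecture. The paper does not supply its own proof of this lemma; it records it as a well-known consequence of Schreier's conjecture and refers to \cite[Lemma~2.1]{khu-shu131}, so your write-up is precisely the intended argument.
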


The following elementary lemma may also be known.

\begin{lemma}\label{l1}
Let $N$ be a soluble normal subgroup of a finite group $G$. If $x\in F^*(G)$, then $x\in F(N\langle x\rangle )$.
\end{lemma}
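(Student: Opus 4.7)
The plan is to exploit the decomposition $F^*(G)=F(G)E(G)$ as a central product, writing $x=fe$ with $f\in F(G)$, $e\in E(G)$, and $[f,e]=1$, and then to show separately that each of $f$ and $e$ lies in the Fitting subgroup of $H:=N\langle x\rangle$.

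Handling $f$ is immediate: $F(G)\cap H$ is normal in $H$ (intersection with a normal subgroup of $G$) and nilpotent (subgroup of $F(G)$), so it is contained in $F(H)$; since $f\in F(G)\cap H$, we obtain $f\in F(H)$.

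The substantive step is to show that $e$ centralizes $H$. Since $e$ already commutes with $f$ and with $\langle x\rangle=\langle fe\rangle$, it suffices to show $[e,N]=1$. Because $N$ is a soluble normal subgroup of $G$, it lies in the soluble radical $S(G)$, so it is enough to prove $[E(G),S(G)]=1$. The argument I would use is: the commutator $[E(G),S(G)]$ lies in both of these normal subgroups, so it is a soluble normal subgroup of $E(G)$. Since $E(G)/Z(E(G))$ is a direct product of nonabelian simple groups, every soluble normal subgroup of $E(G)$ is contained in $Z(E(G))$; thus $[E(G),S(G)]\le Z(E(G))$. Now for each fixed $s\in S(G)$ the map $e\mapsto[s,e]$ is a map $E(G)\to Z(E(G))$ whose image is central in $E(G)$, and the standard commutator identity $[s,e_1e_2]=[s,e_2]\,[s,e_1]^{e_2}$ then shows this map is a \emph{homomorphism} into an abelian group. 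Since $E(G)$ is perfect, this homomorphism is trivial, giving $[s,E(G)]=1$ for every $s\in S(G)$, hence $[E(G),S(G)]=1$.

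Once $e$ is known to commute with all of $N$ and with $f$, the element $e$ is central in $H$, so $\langle e\rangle$ is an abelian (hence nilpotent) normal subgroup of $H$ and $e\in F(H)$. Combining with $f\in F(H)$ gives $x=fe\in F(H)=F(N\langle x\rangle)$, as required. The only step I expect to need any care is the derivation of $[E(G),S(G)]=1$; it is a standard-flavored fact, but since the lemma is billed as \emph{elementary} the author may well simply invoke it, whereas I would include the short ``perfect group into abelian group'' argument above to keep the proof self-contained.
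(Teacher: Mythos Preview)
Your argument has a genuine gap: you tacitly assume that the factors $f\in F(G)$ and $e\in E(G)$ of $x=fe$ lie in $H=N\langle x\rangle$, when you write ``since $f\in F(G)\cap H$'' and later ``$\langle e\rangle$ is an abelian \dots\ normal subgroup of $H$''. There is no reason for this. For a trivial instance take $N=1$, $G=\mathbb{Z}/2\times A_5$, and $x=(1,\sigma)$ with $\sigma$ an involution; then $H=\langle x\rangle$ has order~$2$ and neither $f=(1,1)$ nor $e=(0,\sigma)$ belongs to~$H$. The conclusion $x\in F(H)$ is of course still true here, but not for the reason you give.

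The fix is cheap and keeps your strategy intact: run your argument in the larger subgroup $K=N\langle f,e\rangle\geqslant H$. There $f\in F(G)\cap K\leqslant F(K)$ exactly as you argue, and your proof that $[E(G),S(G)]=1$ shows $e$ centralises $N$, $f$, and itself, so $e\in Z(K)\leqslant F(K)$; hence $x=fe\in F(K)$. Since $F(K)$ is normal in $K$ and $H\leqslant K$, the intersection $F(K)\cap H$ is a nilpotent normal subgroup of $H$, giving $x\in F(K)\cap H\leqslant F(H)$.

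For comparison, the paper takes a different route: it shows directly that $[N\langle x\rangle,x,\dots,x]=1$ for sufficiently many $x$'s and then invokes Baer's theorem. The key computation there is that $[N\langle x\rangle,x]\leqslant N\cap F^*(G)\leqslant F(G)$, after which one uses $[F(G),e]=1$ (quoted as a known fact, without your homomorphism argument) to reduce to iterated commutators with $f$ inside the nilpotent group $F(G)$. Your corrected approach avoids Baer's theorem and the commutator calculus entirely, trading them for the observation $F(K)\cap H\leqslant F(H)$; both rest on the same underlying fact that $E(G)$ centralises soluble normal subgroups.
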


\begin{proof}
 We claim that $[N\langle x\rangle,x,\dots ,x]=1$ if $x$ is repeated sufficiently many times. Write  $x=yz$, where $y\in F(G)$ and $z\in E(G)$. Since  $[N\langle x\rangle , x]\leqslant N\cap F^*(G)\leqslant F(G)$ and  $[F(G),z]=1$, we have
\begin{align*} [N,x,x,\dots ,x]&\leqslant [F(G), yz,\dots, yz ]\\
&=[F(G), y,\dots, y].
\end{align*}
The commutator on the right becomes trivial if it is sufficiently long, because $F(G)$ is nilpotent. As a result, $x\in F(N\langle x\rangle )$ by Baer's theorem (see \cite[Satz~III.6.15]{hup}).
\end{proof}

We shall use without special references the following well-known properties of the generalized Fitting subgroups relative to normal subgroups: if $N \trianglelefteqslant G$, then $F^*(N)=F^*(G)\cap N$ and $F^*(G)N/N \leqslant F^*(G/N)$. These and other properties follow, for example, from the fact that $F^*(G)$ is the set of all elements of $G$ that induce inner automorphisms on every chief factor of $G$; see, for example, \cite[Ch.~X, \S\,13]{hup3}. It is easy to see that similar properties hold for the higher terms of the generalized Fitting series:  if $N\trianglelefteqslant G$, then $F^*_i(N)=F^*_i(G)\cap N$ and $F^*_i(G)N/N \leqslant F^*_i(G/N)$.

\begin{proof}[Proof of Theorem~\ref{t2}] We have a finite group $G$ admitting  an automorphism $\varphi$ of prime order $p$ coprime to $|G|$. We need to show that $F^*(C_G(\varphi ))\leqslant F^*_{9}(G)$.  Recall that $S(G)$ denotes the soluble radical of $G$. Let $L$ be the inverse image of $F^*(G/S(G))$ and let  $L/S(G)=S_1\times \dots \times S_m$, where the $S_i$ are non-abelian simple groups.  Let $K$ denote the kernel of the permutational action of $G$ on  $\{S_1, \dots , S_m\}$. The quotient
 $K/L$ is soluble by Lemma~\ref{l21}.

\begin{lemma}\label{lh1} We have the inclusion
  $F^*(C_G(\varphi ))\leqslant K$.
\end{lemma}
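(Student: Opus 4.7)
\emph{Plan.}

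The strategy is to reduce modulo the soluble radical and then work in $\bar G := G/S(G)$, where the structure becomes transparent. Since $S(G) \leq L \leq K$, it is enough to show that $F^*(H)$ maps into $\bar K := K/S(G)$ under the projection $G \to \bar G$. By coprime action, $H = C_G(\varphi)$ projects onto $\bar H := C_{\bar G}(\varphi)$ with kernel $H \cap S(G) = C_{S(G)}(\varphi)$, which is soluble; the standard push-forward property $F^*(H)N/N \leq F^*(H/N)$ applied with $N = H\cap S(G)$ therefore yields
\[
F^*(H)\, S(G)/S(G) \;\leq\; F^*(\bar H)
\]
as subgroups of $\bar G$. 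So the problem reduces to showing $F^*(\bar H) \leq \bar K$.

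In $\bar G$ we have $F^*(\bar G) = L/S(G) = S_1 \times \cdots \times S_m$; since $C_{\bar G}(F^*(\bar G)) \leq F^*(\bar G)$ and $Z(\prod S_i) = 1$, the group $\bar G$ acts faithfully on $\prod S_i$, and its permutation action on $\{S_1,\ldots,S_m\}$ has kernel precisely $\bar K$. Write $F^*(\bar H) = F(\bar H) E(\bar H)$. For a component $\bar Q$ of $\bar H$: since $\bar Q$ is quasisimple, any proper normal subgroup of $\bar Q$ is central, so $\bar Q \not\subseteq \bar K$ would force the image of $\bar Q$ in $\bar G/\bar K \leq \mathrm{Sym}_m$ to be a non-abelian quasisimple permutation group commuting with the permutation induced by $\varphi$ on $\{S_i\}$. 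The plan is to rule out this configuration by combining (i) the commuting orbit structures of $\bar Q$ and $\langle\varphi\rangle$ on $\{S_1,\ldots,S_m\}$, (ii) the order constraint $(|\varphi|, |\bar Q|) = 1$, and (iii) the quasisimplicity of $\bar Q$. A similar but easier analysis should handle the nilpotent summand $F(\bar H)$, whose image in $\mathrm{Sym}_m$ is a nilpotent permutation group also commuting with $\varphi$.

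\emph{Main obstacle.} The crux is to exclude a component $\bar Q$ of $\bar H$ acting non-trivially on $\{S_1, \ldots, S_m\}$. I expect this to be handled by a careful orbit analysis: since $\bar Q$ and $\varphi$ induce commuting permutations on $\{S_i\}$ and $\varphi$ has prime order $p$ coprime to $|\bar Q|$, the interplay of their orbits — each $\varphi$-invariant $\bar Q$-orbit must contain a $\varphi$-fixed $S_i$ because the orbit size is prime to $p$ — should, together with the non-abelian quasisimplicity of any non-trivial image of $\bar Q$, force that image to be trivial. Schreier's conjecture (already packaged into Lemma~\ref{l21}, which gave $\bar K/\bar L$ soluble) and the Wang--Chen theorem cited in the introduction are plausible additional inputs for pinning down the contradiction.
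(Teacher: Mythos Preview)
Your reduction to $\bar G = G/S(G)$ is fine and matches the paper. The gap is in the plan for showing $F^*(\bar H) \leqslant \bar K$: you propose to derive a contradiction purely from the permutation action on $\{S_1,\ldots,S_m\}$, using that the image of $F(\bar H)$ or of a component $\bar Q$ is a nilpotent (resp.\ quasisimple) permutation group commuting with the permutation induced by $\varphi$. But this cannot work at the abstract permutation level. A quasisimple group of order prime to $p$ can certainly act faithfully on a set while commuting with a permutation of order $p$ (for instance, let it permute only the $\varphi$-fixed points and act trivially on the rest), and the same goes for a nilpotent group. The coprimeness/orbit observations you list---each $\varphi$-invariant $\bar Q$-orbit contains a $\varphi$-fixed point, etc.---are true but do not by themselves yield a contradiction.

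What the paper uses, and what your outline is missing, is the \emph{internal} structure coming from the subnormality of the $S_i$ in $\bar G$. For a nontrivial $\varphi$-orbit $\{S_{i_1},\ldots,S_{i_p}\}$, the diagonal $D = C_{\bar G}(\varphi)\cap(S_{i_1}\times\cdots\times S_{i_p})$ is a non-abelian simple subgroup that is subnormal in $\bar H$, hence normal in $F^*(\bar H)$; so no element of $F^*(\bar H)$ can move that orbit. Combined with the coprimeness argument you do mention, this forces any offending $x\in F^*(\bar H)\setminus\bar K$ to move some $\varphi$-fixed $S_i$ to another $\varphi$-fixed $S_j$. At that point one must exploit $C_{S_i}(\varphi)$, which is again subnormal in $\bar H$. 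For $x\in F(\bar H)$ the Wang--Chen theorem (which you flag only as a ``plausible additional input'') is essential and does real work: $[C_{S_i}(\varphi),x]\leqslant F(\bar H)$ is nilpotent, yet its projection to $S_i$ covers the non-nilpotent group $C_{S_i}(\varphi)$. For a rogue component $Q\not\leqslant \bar K$ moving $S_i$, one takes a minimal normal subgroup $M$ of $C_{S_i}(\varphi)$; subnormality forces $M\leqslant F^*(\bar H)$, and then either $M\leqslant F(\bar H)$ or $M$ is a product of components lying inside $\bar L$ (hence distinct from $Q$), in both cases giving $[M,Q]=1$, contradicting that $Q$ moves $S_i$. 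None of this can be replaced by orbit bookkeeping in $\mathrm{Sym}_m$ alone.
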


\begin{proof}
We argue by contradiction. Let $x\in F^*(C_G(\varphi ))\setminus K$. Consider the quotient  $\bar G=G/S(G)$. Since $C_{\bar G}(\varphi )$ is the image of $C_{ G}(\varphi )$, then also $\bar x\in F^*(C_{\bar G}(\varphi ))\setminus \bar K$. Thus we can assume that $S(G)=1$ and  $L=S_1\times \dots \times S_m$. Then $x$ must permute these factors nontrivially.
Since $x\in C_G(\varphi )$, the element $x$ also permutes the orbits of $\varphi$ in the permutational action on $\{S_1, \dots , S_m\}$. If $x$ really moves some nontrivial $\varphi$-orbit, say, $\{S_{i_1}, \dots , S_{i_p}\}$, then $x$ moves the `diagonal' non-abelian simple group $D= C_G(\varphi )\cap (S_{i_1}\times  \dots \times  S_{i_p})$. Since $S_{i_1}\times  \dots \times  S_{i_p}$ is a subnormal subgroup, $D$ is subnormal in $C_G(\varphi )$ and therefore is normal in $F^*(C_G(\varphi ))$, contrary to being moved by $x$.

Thus,  $x$ stabilizes every nontrivial $\varphi$-orbit  $\{S_{i_1}, \dots , S_{i_p}\}$. Since the centralizer of $\varphi$ as a cycle of length $p$ in the symmetric group on $p$ symbols is $\langle\varphi\rangle$ and $p$ is coprime to $|G|$, it follows that $x$ must leave each factor $S_{i_1}, \dots , S_{i_p}$ fixed.

Therefore $x$ must really move some of the one-element orbits of $\varphi$, say, $S_i^x=S_j\ne S_i$, where both $S_i$ and $S_j$ are $\varphi$-invariant.
By the result of Wang and Chen \cite{wa-ch} based on the classification, $C_{S_i}(\varphi )$ cannot be a nilpotent group. If $x\in F(C_G(\varphi ))$, then we obtain a contradiction, since $[C_{S_i}(\varphi ),x]\leqslant F(C_G(\varphi ))$ 
and yet the projection of   $[C_{S_i}(\varphi ),x]$ onto $S_i$ 
covers $C_{S_i}(\varphi )$, which is not nilpotent. It remains to consider the case where $x\in F^*(C_G(\varphi ))\setminus F(C_G(\varphi ))$. Then at least one of the quasisimple components $Q$ of $F^*(C_G(\varphi ))$ is not contained in $K$ and moves $S_i$. Let $M$ be a minimal normal subgroup of $C_{S_i}(\varphi )$. Since $C_{S_i}(\varphi )$ is subnormal in $C_{G}(\varphi )$, it follows that $M\leqslant F^*(C_G(\varphi ))$. If $M$ is nilpotent, then $[M,Q]=1$. If $M$ is nonsoluble, then it is a product of quasisimple components  of $F^*(C_G(\varphi ))$ which are all different from $Q\not\leqslant L$ and then, too, $[M,Q]=1$. In either case we obtained a contradiction with $[M,Q]$ having nontrivial projection onto~$S_i$.
\end{proof}

We now complete the proof of Theorem~\ref{t2}. Recall that $L/S(G)=F^*(G/S(G))$ and $K$ is the kernel of the permutational action of $G$ on the set of subnormal simple factors of $L/S(G)$. By Lemma~\ref{lh1} we know that $F^*(C_G(\varphi ))$ is contained in $K$. Therefore $F^*(C_G(\varphi ))\leqslant F^*(C_K(\varphi ))$. Since  $F^*_{9}(K)\leqslant F^*_{9}(G)$,  it is sufficient to prove that  $F^*(C_K(\varphi ))\leqslant F^*_{9}(K)$. Thus, we can assume from the outset that $G=K$.

Let $x\in F^*(C_G(\varphi ))$ and let $X=\langle x^G\rangle$ be the normal closure of $x$. It is sufficient to prove that $X\leqslant F^*_9(G)$. Consider the soluble subgroup $H=S(X)\langle x\rangle $. By Lemma~\ref{l1} applied to  $C_G(\varphi )$ with $N=C_{S(X)}(\varphi )$, we obtain that $$
x\in F(C_{S(X)}(\varphi )\langle x\rangle )=F(C_H(\varphi )).
 $$
 Then $x\in F_4(H)$ by Thompson's theorem \cite{tho64}.

Therefore, $[x, S(X)]\leqslant F_4(H)\cap S(X)\leqslant F_4(S(X))\leqslant F_4(G)$. In the quotient $\bar G=G/F_4(G)$ this means that $\bar x\in C_{\bar G}({S(\bar X)})$. Since $X=\langle x^G\rangle$, it follows that $S(\bar X)\leqslant Z(\bar X)$. Then $F^*(\bar X)=S(\bar X)(\bar L\cap \bar X)$ because $L/S(G)$ is a direct product of simple groups and $G/L$ is soluble. Therefore $\bar X/F^*(\bar X)$ is a soluble group. Since $F^*(\bar X)\leqslant F^*(\bar G)$, we obtain that the image $\tilde X$ of $X$ in $\tilde G=G/F^*_5(G)$ is a soluble group.

We now apply Thompson's theorem \cite{tho64} to $\tilde X$. Namely, since $\tilde x\in F^*(C_{\tilde X}(\varphi ))= F(C_{\tilde X}(\varphi ))$, we have $\tilde x\in F_4(\tilde X)$.  Then $\tilde X = F_4(\tilde X)\leqslant F_4(\tilde G)$. As a result, $X\leqslant F^*_9(G)$, as required.
\end{proof}

\begin{proof}[Proof of Theorem~\ref{t1}]
We have a finite group $G$ admitting a soluble group of automorphisms $A$ of coprime order. We are going to prove that the generalized Fitting height $h^*(G)$ satisfies the inequality $h^*(G) \leqslant 9^{\alpha }\cdot h^*(C_G(A))+(9^{\alpha }-1)/8$, where $|A|$ is a product of $ \alpha =\alpha (|A|)$ primes (counting multiplicities). We proceed by induction on $ \alpha (|A|)$. Let $h=h^*(C_G(A))$. When $ \alpha (|A|)=1$, it follows from Theorem~\ref{t2} that $C_G(A)\leqslant F^*_{9h}(G)$, and then $G/F^*_{9h}(G)$ is nilpotent by Thompson's theorem \cite{tho59}. Thus,  $h^*(G)  \leqslant 9   h+1$ 
For $ \alpha (|A|)>1$, let $A_0$ be a normal subgroup of prime index in $A$. Then $C_G(A_0)$ admits the group of automorphisms $A/A_0$ of prime order and $C_{C_G(A_0)} (A/A_0)=C_G(A)$. By the above, $h^*(C_G(A_0)) \leqslant 9h+1$. By induction applied to $A_0$ we obtain
\begin{align*} 
h^*(G) &\leqslant 9^{\alpha -1 } \cdot (9 h+1)+ (9^{\alpha -1}-1)/8\\
            &=9^{\alpha }\cdot h+(9^{\alpha }-1)/8,
\end{align*}
as required.
\end{proof}

\section{Nonsoluble length}

Recall that the nonsoluble length $\lambda (G)$  of a finite group $G$ is the minimum 
number of nonsoluble factors in a normal series
each of whose factors either is soluble or is a direct product of nonabelian simple groups.
Consider the `upper nonsoluble series' of $G$, which by definition starts from the soluble radical $M_1=S(G)$, then $L_1$ is the full inverse image of $F^*(G/M_1)$, and then by induction $M_k$ is the full inverse image of the soluble radical of $G/L_{k-1}$ and $L_k$ the full inverse image of $F^*(G/M_k)$. It is easy to show that $\lambda (G)=m$, where $m$ is the first positive integer such that $M_{m+1}=G$. In the normal series
\begin{equation}\label{e-riad}
1=L_0\leqslant M_1 <  L_1\leqslant M_2<  \dots \leqslant M_{m+1}=G
\end{equation}
each quotient $U_i=L_i/M_i$ is a (nontrivial) direct product of nonabelian simple groups, and each quotient $M_i/L_{i-1}$ is soluble (possibly trivial). Since  $L_i/M_i=F^*(G/M_i)$, the properties mentioned at the beginning of \S\,\ref{s-f} apply. In particular,
if we write one of those  nonsoluble quotients as a direct product $U_k=S_1\times \dots \times S_t$ of nonabelian simple groups $S_i$, then the set of these factors $S_i$ is unique, characterized as the set of subnormal simple subgroups of $G/M_k$. The group $G$ and its automorphisms permute these subnormal factors, and for brevity we simply speak of their orbits on $U_k$ meaning orbits in this permutational action. The subgroup $L_k/M_k$ contains its centralizer in $G/M_k$.  The kernel of the permutational action of $G/L_k$ on $\{S_1, \dots , S_t\}$ is soluble by Lemma~\ref{l21}. Therefore the inverse image of this kernel is contained in $M_{k+1}$. We shall routinely use this fact without special references.

We shall need the following technical lemma.

\begin{lemma}\label{nn}
Let $N$ be a normal subgroup of a finite group $G$ and $K/N$ a simple subnormal subgroup of $G/N$. Let $D$ be
a  subgroup of $G$ such that $K=DN$. Suppose that $D\leqslant L_j(G)N$ and  $D\not\leqslant M_j(G)N$. Then $[D,D^x]NM_j(G)=KM_j(G)$ for any $x\in L_j(G)N$.
\end{lemma}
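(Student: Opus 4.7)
The natural approach is to pass to the quotient $\tilde{G}=G/NM_j$, which is designed to kill both $N$ and $M_j$ simultaneously so that the structural features of the upper nonsoluble series become transparent. Writing $\tilde{H}$ for the image of a subgroup $H$ in $\tilde{G}$, we have $\tilde{N}=1$, so $\tilde{K}=\tilde{D}$ because $K=DN$. Moreover, $\tilde{L}_j$ is a quotient of $L_j/M_j$ modulo the normal subgroup $(L_j\cap NM_j)/M_j$, and since $L_j/M_j$ is a direct product of nonabelian simple groups, $\tilde{L}_j$ is itself a direct product of some of those simple factors, say $\tilde{L}_j = T_1\times\cdots\times T_s$. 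The hypothesis $D\leqslant L_jN$ translates to $\tilde{D}\leqslant \tilde{L}_j$, and the hypothesis $D\not\leqslant M_jN$ gives $\tilde{D}\neq 1$.

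The first real content is to identify $\tilde{D}$ with one of the $T_i$. Since $K/N$ is simple subnormal in $G/N$, the subgroup $K$ is subnormal in $G$, so $\tilde{D}=\tilde{K}$ is subnormal in $\tilde{G}$ and, intersecting a subnormal chain with $\tilde{L}_j$, also subnormal in $\tilde{L}_j$. Moreover $\tilde{D}$ is simple: it is a nontrivial quotient of the simple group $K/N$. A standard fact about direct products of nonabelian simple groups says that every subnormal subgroup is a direct product of a subset of the factors, so the simplicity of $\tilde{D}$ forces $\tilde{D}=T_k$ for some $k$. In particular, $\tilde{D}$ is normal in $\tilde{L}_j$ and is perfect (being nonabelian simple).

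With $\tilde{D}$ pinned down as a direct factor of $\tilde{L}_j$, the conclusion is immediate: for any $x\in L_jN$ we have $\tilde{x}\in\tilde{L}_j$, whence $\tilde{D}^{\tilde{x}}=\tilde{D}$ by the normality of a direct factor, and therefore
\[
[\tilde{D},\tilde{D}^{\tilde{x}}]=[\tilde{D},\tilde{D}]=\tilde{D}
\]
by perfectness. Lifting to $G$, this says $[D,D^x]NM_j = DNM_j = KM_j$, which is the claimed equality (using $N\leqslant K$ to absorb $N$ into $K$).

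I do not see a genuine obstacle here; the only place requiring a moment of care is the identification of $\tilde{D}$ as an entire simple factor of $\tilde{L}_j$, since \emph{a priori} a simple subnormal subgroup of a direct product of simple groups could be suspected of being a diagonal across isomorphic factors. The point is that a diagonal is not even normal in the ambient product, hence not subnormal in any nontrivial sense, so subnormality together with simplicity really does pick out a single factor.
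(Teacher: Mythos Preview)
Your proof is correct and follows essentially the same strategy as the paper's: reduce modulo $M_j$, identify the image of $D$ with a single simple factor of $L_j/M_j$, and use that such a factor is normal (hence fixed under conjugation by $\tilde x$) and perfect. The only difference is cosmetic: the paper mods out by $M_j$ alone, keeps $N$ in play, shows $K=S\times N$ for a simple factor $S$, and argues via projections onto $S$; you mod out by $NM_j$ at once, which makes $\tilde D$ itself the simple factor and avoids the separate verification that $N$ normalises $S$. Your choice of quotient is marginally cleaner, but the underlying idea is identical.
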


\begin{proof}    We can pass to the quotient $G/M_j(G)$ and without loss of generality assume that $j=1$ and $G$ has no nontrivial normal soluble subgroups. It follows that $K=SN$, where $S$ is a subnormal simple subgroup of $G$ (contained in  $L_1(G)$). The group $N$ permutes the subnormal  simple factors of $L_1$ contained in $K$ and  normalizes $L_1\cap N$; therefore $N$ also normalizes $S$. Hence,  $K=S\times N$. Now it is clear that the projection of $D$ onto $S$ is equal to $S$ and hence so is that of $D^x$ for any $x\in L_1(G)N$. Hence the projection of  $[D,D^x]$ also equals $S$, as $S$ is non-abelian simple.
\end{proof}

\begin{proof}[Proof of Theorem~\ref{t3}] We have  a finite group $G$ admitting a group of automorphisms $A$ of coprime order. We wish to show that its  nonsoluble length $\lambda (G)$ is bounded in terms of $\lambda (C_G(A))$ and the number of prime factors of $|A|$ counting multiplicities. We can of course assume that $G$ is nonsoluble, whence $A$ then has odd order and therefore is soluble by the Feit--Thompson theorem. The result will follow from the case of $|A|$ being a prime by straightforward induction on $|A|$. Thus the bulk of the proof is the following proposition.

\begin{proposition}\label{p-p}
Suppose that a finite group $G$ admits an automorphism $\varphi$ of prime order $p$ coprime to $|G|$ such that $[G, \varphi ]=G$.  Then $\lambda (G)\leqslant \lambda (C(\varphi ))+1$.
\end{proposition}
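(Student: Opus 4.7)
The plan is to proceed by induction on $m = \lambda(G)$. The cases $m \leqslant 1$ are trivial since $\lambda(C_G(\varphi)) \geqslant 0$. For the inductive step, one first reduces to $S(G) = 1$ by passing to $G/S(G)$, which preserves $m$ and the hypothesis $[G,\varphi] = G$, and replaces the centralizer by its quotient $C_G(\varphi)S(G)/S(G)$. Let $1 = L_0 \leqslant M_1 < L_1 \leqslant M_2 < \dots \leqslant M_{m+1} = G$ denote the upper nonsoluble series, whose terms are characteristic and hence $\varphi$-invariant, with nonsoluble factors $U_k = L_k/M_k$ being direct products of nonabelian simple groups $S_{k,1},\dots,S_{k,t_k}$.

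The key structural step is: for every $k < m$, the automorphism $\varphi$ has a nontrivial $p$-cycle among $\{S_{k,1},\dots,S_{k,t_k}\}$. Suppose instead that $\varphi$ stabilizes each $S_{k,i}$. Let $K$ denote the kernel of the permutation action of $G$ on $\{S_{k,i}\}$; by Lemma~\ref{l21} applied in $G/M_k$ we have $K \leqslant M_{k+1}$, and $G/K$ embeds into $\mathrm{Sym}(\{S_{k,i}\})$. Since the image of $\varphi$ in this symmetric group is the identity, and the permutation action of $G$ is $\varphi$-equivariant, the action of $\varphi$ by conjugation on $G/K$ is trivial, so $[G,\varphi] \leqslant K$. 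Combined with $G = [G,\varphi]$ this forces $G = K$, hence $G$ itself stabilizes each $S_{k,i}$; but then $G/L_k$ embeds into $\prod_i \mathrm{Out}(S_{k,i})$, which is soluble by Schreier's conjecture, so $\lambda(G) \leqslant k < m$, a contradiction. The presence of a nontrivial $p$-cycle yields a ``diagonal'' nonabelian simple subgroup in $C_{U_k}(\varphi)$, proving that $C_{U_k}(\varphi)$ is nonsoluble for every $k < m$.

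It remains to convert these level-by-level nonsoluble contributions into the lower bound $\lambda(C_G(\varphi)) \geqslant m - 1$. Applying the inductive hypothesis to $G/M_2$ (which has nonsoluble length $m - 1$ and inherits all hypotheses) yields $\lambda(CM_2/M_2) \geqslant m - 2$, where $C = C_G(\varphi)$. The structural step for $k = 1$ produces a nonsoluble diagonal in $C \cap L_1 \leqslant C \cap M_2$, and one uses Lemma~\ref{nn} to exhibit a genuinely new nonsoluble layer at the bottom of the upper nonsoluble series of $C$, living strictly below the layers contributing to $CM_2/M_2$; combining gives $\lambda(C) \geqslant m - 1$, hence $\lambda(G) \leqslant \lambda(C_G(\varphi)) + 1$. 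The main obstacle is precisely this final aggregation: a priori the various diagonal simple subgroups arising at different levels of $G$ all lie in $F^*(C_G(\varphi))$, so one needs Lemma~\ref{nn} to verify that the lower-level contributions remain subnormal in $C_G(\varphi)$ (and hence captured inside $L_1(C_G(\varphi))$) while the higher-level contributions are not, ensuring that the levels of the upper nonsoluble series of $C_G(\varphi)$ do not collapse.
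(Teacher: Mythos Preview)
Your structural step---that $\varphi$ has a nontrivial $p$-orbit on $U_k$ for every $k<m$, hence each $C_{U_k}(\varphi)$ contains a nonabelian simple ``diagonal''---is correct and is exactly Lemma~\ref{l-action} of the paper. The gap is in the aggregation step, which you yourself flag as the main obstacle but do not actually carry out.

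Concretely: from the induction hypothesis you obtain $\lambda\bigl(C/(C\cap M_2)\bigr)\geqslant m-2$, and from level~$1$ you obtain a subnormal nonabelian simple subgroup $D_1\leqslant C\cap L_1\leqslant C\cap M_2$, hence $D_1\leqslant\lambda_1(C)$. But these two pieces of information do \emph{not} combine to give $\lambda(C)\geqslant m-1$. In general, for $N\trianglelefteq H$ one has only $\lambda(H)\leqslant\lambda(N)+\lambda(H/N)$, not the reverse inequality; a direct product of $m-1$ nonabelian simple groups already shows that a subnormal simple factor inside $N$ and $\lambda(H/N)\geqslant m-2$ are compatible with $\lambda(H)=1$. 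What you need is that the nonsoluble layers witnessing $\lambda\bigl(C/(C\cap M_2)\bigr)\geqslant m-2$ sit strictly above $\lambda_1(C)$ in the upper nonsoluble series of $C$, and nothing in your inductive setup forces this: the induction hypothesis speaks only about the series of $C/(C\cap M_2)$, and your choice of $D_1$ is unrelated to the diagonals implicitly used at higher levels. Your appeal to Lemma~\ref{nn} does not help here, since that lemma says nothing about subnormality; it controls commutators $[D,D^x]$ once a very particular configuration has been set up.

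The paper resolves this by abandoning induction on $m$ in favour of a direct construction. The key technical ingredient is Lemma~\ref{l-acts}: one can choose a lift $\hat D_k$ of the diagonal $D_k$ so that $\hat D_k$ \emph{moves} some nontrivial $\varphi$-orbit on $U_{k-1}$, and one then takes $D_{k-1}$ to be the diagonal of precisely that orbit. This linkage between consecutive levels---which is lost in your inductive scheme---is what allows one to prove, by a separate induction on $k$ using Lemma~\ref{nn} and the commutator relation $[\hat D_{k-1},\hat D_{k-1}^x]\leqslant M_{k-1}$ for suitable $x\in\hat D_k$, that $\hat D_k\not\leqslant\mu_k M_k$ for every $k=1,\dots,m-1$. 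That is the statement which genuinely separates the levels of the series~\eqref{e-rc} and yields $\lambda(C_G(\varphi))\geqslant m-1$.
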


Clearly, we can assume that $G$ is nonsoluble; in particular, $|\varphi |\geqslant 3$.

We consider the `upper nonsoluble series' for $C_G(\varphi )$ constructed in the same way as \eqref{e-riad} was constructed for $G$, with its terms denoted by
\begin{equation}\label{e-rc}
1=\lambda _0\leqslant \mu _1< \lambda _1\leqslant \mu _2<  \dots \leqslant \mu _{l+1}=C_G(\varphi ),
\end{equation}
so that the quotients   $\lambda _i/\mu _i$ are (nontrivial) direct products of nonabelian simple groups, and the quotients $\mu _i/\lambda _{i-1}$ (possibly trivial) are soluble. Our task is to show that the nonsoluble length $\lambda (G)$ of $G$, which is equal to $m$ in \eqref{e-riad}, is at most $\lambda (C_G(\varphi ))+1$, that is, at most $l+1$.

\begin{lemma}\label{l-action}
For every $k\leqslant m-1$, the  automorphism $\varphi$ has nontrivial orbits (of length $p=|\varphi |$) on the quotient $U_k$.
\end{lemma}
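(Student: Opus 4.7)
The plan is to argue by contradiction: suppose that for some $k \leqslant m-1$ the automorphism $\varphi$ fixes every simple factor $\bar S_j$ of $U_k = L_k/M_k$ set-wise, and derive a contradiction with the hypothesis $[G, \varphi] = G$. I would first pass to the characteristic quotient $\bar G = G/M_k$, on which $\varphi$ still acts by coprimeness. The group $\bar G$ acts by conjugation on the set $\Omega = \{\bar S_1, \ldots, \bar S_t\}$ of simple factors of $\bar L_k = L_k/M_k$, yielding a permutation homomorphism $\rho \colon \bar G \to \mathrm{Sym}(\Omega)$. The kernel $\ker \rho$ contains $\bar L_k$, and $\ker \rho / \bar L_k$ is soluble by Lemma~\ref{l21}, whence $\ker \rho \leqslant M_{k+1}/M_k$ by the observation immediately preceding Lemma~\ref{nn}.

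The heart of the argument is then to use the assumption that $\varphi$ fixes each $\bar S_j$ set-wise to show $[\bar G, \varphi] \leqslant \ker \rho$. The key one-line observation is that for any $\bar g \in \bar G$ the elements $\bar g$ and $\bar g^\varphi$ induce the same permutation on $\Omega$: indeed, applying $\varphi^{-1}$ to the equality $\bar S_j^{\bar g^\varphi} = (\bar g^\varphi)^{-1}\bar S_j \bar g^\varphi$ yields $\bar S_j^{\bar g}$ because $\varphi^{-1}$ fixes each $\bar S_j$, and re-applying $\varphi$ preserves the result. Consequently $\rho(\bar g^{-1}\bar g^\varphi) = 1$ for every $\bar g \in \bar G$, so every commutator $[\bar g,\varphi]$ lies in $\ker \rho$, giving $[\bar G,\varphi] \leqslant \ker \rho$.

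Finally, since $M_k$ is $\varphi$-invariant, the hypothesis $[G, \varphi] = G$ descends to $[\bar G, \varphi] = \bar G$, and chaining the inclusions gives $\bar G \leqslant \ker \rho \leqslant M_{k+1}/M_k$, i.e.\ $G \leqslant M_{k+1}$. But for $k \leqslant m-1$ the upper nonsoluble series has $M_{k+1} < L_{k+1} \leqslant G$ (the strict inequality holds because $U_{k+1}$ is a nontrivial direct product of nonabelian simple groups), hence $M_{k+1} < G$, the desired contradiction. The only conceptually nontrivial ingredient is the observation that an automorphism fixing each simple direct summand commutes with the permutation action of the ambient group on the set of summands; everything else is routine bookkeeping with the characteristic series and the Schreier-based fact that the kernel of the permutation action is soluble modulo $\bar L_k$.
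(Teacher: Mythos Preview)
Your proof is correct and follows essentially the same approach as the paper: both hinge on the observation that if $\varphi$ lies in the kernel of the permutational action of $G\langle\varphi\rangle$ on the simple factors of $U_k$, then so does $[G,\varphi]=G$, contradicting the fact that $G/L_k$ (being nonsoluble for $k\leqslant m-1$) acts nontrivially by Lemma~\ref{l21}. The paper simply phrases the step $[\bar G,\varphi]\leqslant\ker\rho$ as ``the kernel is normal in $G\langle\varphi\rangle$ and contains $\varphi$,'' whereas you unwind this explicitly as the calculation $\rho(\bar g)=\rho(\bar g^{\varphi})$; likewise the paper concludes directly from nontriviality of the $G$-action rather than routing through the inclusion $\ker\rho\leqslant M_{k+1}/M_k$.
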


\begin{proof}
For $k\leqslant m-1$, the group $G/L_{k}$ is nonsoluble and therefore by Lemma~\ref{l21} acts nontrivially by permutations on the set of subnormal simple factors of $U_{k}$. Since $G=[G,\varphi ]$ by hypothesis, the  automorphism $\varphi$ also has nontrivial orbits (of course, of length $p=|\varphi |$) on the set of these factors. Indeed, if $\varphi$ belonged to the kernel of the permutational action of $G\langle \varphi \rangle$, then $[G,\varphi ]$ would also belong to this kernel, which is a normal subgroup.
\end{proof}

Consider one of the 
 nonsoluble quotients $U_k=L_k/M_k$ such that the automorphism $\varphi$  has a nontrivial orbit  $\{S_1,\dots ,S_p$\} of length $p=|\varphi |$ on the set of subnormal factors of $U_k$.   Let  $S=S_1\times \dots \times S_p$, and let  $D=C_S(\varphi )$ be the diagonal subgroup of $S$, which is a non-abelian simple group  (isomorphic to $S_1$).

\begin{lemma}\label{l-pro} There is an index $i$ such that for any  $\varphi$-invariant subgroup $F$ with $FM_k/M_k\geqslant D$ there is a subgroup $\hat D\leqslant C_F(\varphi )$ such that
\begin{itemize}
\item[(1)] \ $\hat D M_k/M_k=D$,
\item[(2)] \ $\hat D\leqslant \lambda _i M_k$,
\item[(3)] \ $\hat D\not\leqslant \mu  _i M_k$.
\end{itemize}
\end{lemma}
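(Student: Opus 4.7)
The plan is to build a canonical $\varphi$-invariant preimage $C$ of $D$ in $C_G(\varphi)$, pin down the index $i$ purely from where $C$ sits in the series~\eqref{e-rc}, and then for each admissible $F$ cut $C$ down to a subgroup of $C_F(\varphi)$ that still satisfies (1)--(3).

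First, write $\pi\colon L_k\to L_k/M_k$ and put $H_0:=\pi^{-1}(D)$. Since $\varphi$ centralises $D$ and both $L_k, M_k$ are characteristic in $G$, the subgroup $H_0$ is $\varphi$-invariant, and by coprime action
\[
C_{H_0}(\varphi)M_k/M_k\;=\;C_{H_0/M_k}(\varphi)\;=\;C_D(\varphi)\;=\;D.
\]
Set $C:=C_{H_0}(\varphi)\leqslant C_G(\varphi)=\mu_{l+1}$; then $CM_k=H_0$ and $C/(C\cap M_k)\cong D$.

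Second, let $i$ be the least index with $C\leqslant\lambda_iM_k$. This exists with $i\leqslant l$: the quotient $C_G(\varphi)/\lambda_l$ is soluble, so $C/(C\cap\lambda_lM_k)$ is simultaneously a quotient of $D$ and a soluble group, hence trivial. I claim automatically $C\not\leqslant\mu_iM_k$: otherwise $C/(C\cap\lambda_{i-1}M_k)$ is a quotient of $D$, so is $D$ or trivial; the first case embeds the non-abelian simple group $D$ into the soluble quotient $\mu_iM_k/\lambda_{i-1}M_k$ (a homomorphic image of $\mu_i/\lambda_{i-1}$), and the second contradicts the minimality of $i$. Crucially, $i$ depends only on the canonical data $(k,D)$ and not on $F$.

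Finally, given any $\varphi$-invariant $F$ with $FM_k/M_k\geqslant D$, one has $H_0\leqslant FM_k$, so the modular law (using $M_k\leqslant H_0$) gives $H_0=(H_0\cap F)M_k$. The subgroup $H_0\cap F$ is $\varphi$-invariant, and the coprime-action argument of the first step, applied to it, shows that $\hat D:=C_{H_0\cap F}(\varphi)\leqslant C_F(\varphi)$ satisfies $\hat DM_k=H_0$, which yields (1); together with $\hat D\leqslant C$ this gives (2). For (3), the same dichotomy runs again with $\hat D$ in place of $C$: $\hat D/(\hat D\cap\lambda_{i-1}M_k)$ is $D$ or trivial; the former is ruled out by solubility of $\mu_i/\lambda_{i-1}$, and the latter forces $H_0=\hat DM_k\leqslant\lambda_{i-1}M_k$, so $C\leqslant H_0\leqslant\lambda_{i-1}M_k$, contradicting the minimality of $i$. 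The point to get right is that $i$ is fixed before $F$ is chosen; the contradiction for $\hat D$ rides on $\hat DM_k=H_0=CM_k$, so the obstruction to sinking into $\lambda_{i-1}M_k$ is inherited uniformly from $C$.
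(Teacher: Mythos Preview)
Your proof is correct and the underlying idea is the same as the paper's: the index $i$ is pinned down by where the simple group $D$ sits in the image of the series~\eqref{e-rc} modulo $M_k$, and this is independent of $F$. The paper's argument is shorter because it works directly in the quotient $\bar G=G/M_k$: conditions~(2) and~(3) translate to $D\leqslant\bar\lambda_i$ and $D\not\leqslant\bar\mu_i$, which are conditions on $D$ alone, so \emph{any} lift $\tilde D\leqslant C_F(\varphi)$ with $\tilde DM_k/M_k=D$ automatically works and no auxiliary canonical preimage $C$ is needed. Your route via $C=C_{H_0}(\varphi)$ and the dichotomy on $\hat D/(\hat D\cap\lambda_{i-1}M_k)$ is more explicit about the independence of $i$ from $F$, at the cost of some extra bookkeeping; one small presentational point is that in your argument for~(3) the contradiction hypothesis $\hat D\leqslant\mu_iM_k$ should be stated before invoking solubility of $\mu_iM_k/\lambda_{i-1}M_k$.
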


\begin{proof} Recall that since the action of $\varphi$ on $G$ is coprime, the fixed points of $\varphi$ in any $\varphi$-invariant section are images of fixed points of $\varphi$ in $G$. Let
$\tilde D$ be any subgroup of $C_F(\varphi )$ such that $\tilde DM_k/M_k=D$.

  Consider the quotient $\bar G=G/M_k$. The images $\bar \lambda _i$ and $\bar \mu _i$ of the terms of the series \eqref{e-rc}  form a normal series of $C_G(\varphi )M_k/M_k$ with $\bar \lambda _i /\bar \mu _i$ being direct products (possibly empty) of non-abelian simple groups, and  $\bar \mu _i/ \bar \lambda _{i-1}$ being soluble.
  Since $D=\tilde DM_k/M_k$ is  a non-abelian simple subgroup of $C_G(\varphi )M_k/M_k$,
 there is an index $i$ such that $D\leqslant \bar \lambda _i$ and $D\not \leqslant \bar \mu _i$. This means  that $\tilde D\leqslant \lambda _iM_k$ and $\tilde  D\not \leqslant \mu _iM_k$.
\end{proof}

We fix the notation $\hat D$ for a subgroup of $C_F(\varphi )$ constructed for a diagonal subgroup $D$ of a nontrivial $\varphi$-orbit in $U_k= L_k/M_k$  in accordance with  Lemma~\ref{l-pro} (for some $F$); we choose $\hat D$ to be minimal by inclusion subject to satisfying
Lemma~\ref{l-pro}. Note that then $\hat D$ has no nontrivial soluble quotients. We then say for brevity that $\hat D$ \textit{corresponds to $D$} (with or without specifying for which~$F$); these subgroups will also be denoted by $D_k=D$ and  $\hat D_k=\hat D$, where the index only indicates the level of the quotient $U_k= L_k/M_k$.  The subgroups $\hat D_k$ can be chosen in a special way described in the following lemma. Note that since $\hat D_k\leqslant C_G(\varphi )$, the subgroup $\hat D_k $ permutes the orbits of $\varphi$ in its permutational action on the set of simple subnormal factors of $U_{k-1}$.

\begin{lemma}\label{l-acts} Suppose that $k\geqslant 2$ and  $D_k$ is a   diagonal subgroup in a nontrivial $\varphi$-orbit on $U_k$. Then there exists a subgroup $\hat D_{k}$ corresponding to $D_k$ that does not stabilize some nontrivial $\varphi$-orbits on $U_{k-1}$.
\end{lemma}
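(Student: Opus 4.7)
The plan is to prove Lemma~\ref{l-acts} by contradiction. Suppose that for every admissible choice of $F$, every minimal $\hat D_k \leqslant C_F(\varphi)$ corresponding to $D_k$ stabilizes every nontrivial $\varphi$-orbit on $U_{k-1}$ setwise.

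First, I would observe that any $\hat D_k \leqslant C_G(\varphi)$ that stabilizes a nontrivial $\varphi$-orbit $\{T_{j_1},\dots,T_{j_p}\}$ setwise must in fact normalize each of its factors: the induced permutation action of $\hat D_k$ on such an orbit commutes with the $p$-cycle $\varphi$, the centralizer of such a cycle in $\mathrm{Sym}_p$ equals $\langle \varphi\rangle \cong C_p$, and since $|\hat D_k|$ is coprime to $p$, the induced permutation must be trivial. Let $\mathcal T^{\ast}$ denote the $\varphi$-invariant set of simple factors of $U_{k-1}$ lying in nontrivial $\varphi$-orbits, and set $\widetilde K := \bigcap_{T_j \in \mathcal T^{\ast}} N_G(T_j)$; then $\widetilde K$ is $\varphi$-invariant, and by the contradiction hypothesis every $\hat D_k$ is contained in $C_{\widetilde K}(\varphi)$.

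Second, I would exploit the freedom in the choice of $F$ afforded by Lemma~\ref{l-pro}. Taking $F$ to be the full $\varphi$-invariant preimage of $D_k$ in $G$, coprime action shows that $C_F(\varphi)$ surjects onto $D_k$. Since every minimal $\hat D_k \leqslant C_F(\varphi)$ is contained in $\widetilde K \cap C_F(\varphi)$, and minimal such subgroups cover $D_k$ via the projection $C_F(\varphi) \to D_k$, the subgroup $\widetilde K \cap C_F(\varphi)$ surjects onto $D_k$ as well. Consequently $D_k \leqslant \widetilde K M_k/M_k$ inside $L_k/M_k$.

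The third step is the main obstacle: deriving a contradiction from this inclusion. The intuition is that the non-abelian simple diagonal $D_k$ sits inside the generalized Fitting subgroup $L_k/M_k$, so its preimage in $G$ should act essentially (i.e., via a non-soluble quotient) on the simple factors lying in nontrivial $\varphi$-orbits at level $k-1$. To carry this out I anticipate using Lemma~\ref{nn} at the appropriate level: by lifting selected elements of $D_k$ via $\widetilde K$ and forming commutators with suitable $L_{k-1}$-conjugates, one should extract a nonsoluble contribution modulo $M_{k-1}$ that covers some $T_j \in \mathcal T^{\ast}$, contradicting the fact that these commutators, being in $\widetilde K$, must normalize every $T_j \in \mathcal T^{\ast}$ without permuting them. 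The delicate part is the bookkeeping linking the $\varphi$-orbit structure at level $k$ to the permutation action at level $k-1$, and this is where I expect most of the technical work to concentrate.
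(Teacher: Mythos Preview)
Your first two steps are fine: the coprimeness argument showing that stabilising a nontrivial $\varphi$-orbit forces fixing each factor is exactly what the paper uses, and the conclusion $D_k\leqslant \widetilde K M_k/M_k$ is correct. However, your third step is a genuine gap, and the paper's route to the contradiction is quite different from what you sketch.

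The difficulty is that the inclusion $D_k\leqslant \widetilde K M_k/M_k$ by itself does \emph{not} yield a contradiction. The subgroup $\widetilde K$ is only the kernel of the permutation action on the factors lying in nontrivial $\varphi$-orbits; elements of $\widetilde K$ are still free to permute the $\varphi$-fixed simple factors of $U_{k-1}$, and indeed this is where $\hat D_k$ must act nontrivially (since $\hat D_k\not\leqslant M_k$ and the kernel of the full permutation action is contained in $M_k$). So lifting $D_k$ into $\widetilde K$ is perfectly consistent, and I do not see how Lemma~\ref{nn} extracts anything further: the commutators you would form live in $\widetilde K$ and there is no reason they should cover any factor in $\mathcal T^{\ast}$. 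Nor is $\widetilde K$ normal in $G$ in general, so you cannot upgrade the inclusion from $D_k$ to all of $S$.

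The paper avoids this by a different device. Rather than working with the full preimage of $D_k$, it chooses a \emph{minimal} $\varphi$-invariant subgroup $\hat S$ with $\hat S M_k/M_k=S$ (the whole product $S_1\times\cdots\times S_p$, not just the diagonal), and takes $\hat D_k\leqslant C_{\hat S}(\varphi)$. Under the contradiction hypothesis one locates a $\varphi$-\emph{fixed} factor $R_0$ of $U_{k-1}$ that is moved by some $d\in\hat D_k$, and lets $H$ be the stabiliser of $R_0$ in $\hat S\langle\varphi\rangle$. The heart of the proof is an explicit computation in the coordinates of $S_1\times\cdots\times S_p$, using that $\varphi\in H$ and that $\hat D_k^x\leqslant H$ whenever $[\varphi,x]\notin H$, to show that $S\leqslant HM_k/M_k$. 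Then $H\cap\hat S$ is a proper $\varphi$-invariant subgroup still covering $S$, contradicting the minimality of $\hat S$. This minimality argument and the accompanying coordinate calculation are the missing ingredients in your plan.
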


\begin{proof}
Note that the automorphism $\varphi$ does have nontrivial orbits on $U_{k-1}$ by Lemma \ref{l-action}.

Recall that $D_k=C_S(\varphi )$ is the `diagonal' in some $S=S_1\times\dots\times S_p$, where $S_i^{\varphi }=S_{i+1\,({\rm mod}\,p)}$. Choose a minimal  $\varphi$-invariant
subgroup $\hat S$ such that
   $\hat SM_k/M_k=S$.
(Subgroups with these properties obviously do exist: say, the inverse image of $S$.)
 Let $\hat D_{k}\leqslant C_{\hat S}(\varphi )$ be the subgroup corresponding to $D_k$  for $F=\hat S$ in the sense of Lemma~\ref{l-pro}. We claim that $\hat D_{k}$  is a required subgroup.
We argue by contradiction: suppose that $\hat D_{k}$ stabilizes all nontrivial $\varphi$-orbits on $U_{k-1}$. Let $\{R_1,\dots ,R_p\}$ be a nontrivial $\varphi$-orbit on $U_{k-1}$. Since the centralizer of $\varphi$ as a cycle of length $p$ in the symmetric group on $p$ symbols is $\langle\varphi\rangle$ and $p$ is coprime to $|G|$, it follows that $\hat D_{k}$ must leave each factor $R_1,\dots,R_p$ fixed.

On the other hand, since the kernel of the permutational action on  $U_{k-1}$ is contained in $M_k$ by Lemma~\ref{l21}, there is an element $d\in \hat D_{k}\setminus M_k$ that moves some of the subnormal simple factors of $U_{k-1}$. Let $R_0$ be such a factor  that is fixed by $\varphi$ but moved by $d$.

In the permutational action of $\hat S \langle\varphi\rangle$ on the set of subnormal factors of $U_{k-1}$, we now focus on the action of  $\hat S \langle\varphi\rangle$ on  its orbit containing $R_0$. Since $S$ is the only proper normal subgroup of $S\langle\varphi\rangle$ and $d\not \in M_k$, the kernel of this transitive action is contained in $\hat S\cap M_k$; in particular, $\varphi$ acts nontrivially. Let $H$ be the stabilizer of $R_0$ in  $\hat S \langle\varphi\rangle$. We have $\varphi \in H$, but $H$ does not contain $\hat S$, as it does not contain $d\in \hat D_{k}$. Since $\varphi$ acts nontrivially, there are elements  $x\in \hat S$ such that  $\varphi\not\in H^x$.

We claim that
\begin{equation}\label{e-cover}
S\leqslant HM_k/M_k.
 \end{equation}
 Indeed, by our supposition, whenever $\varphi\not\in H^y$ for $y\in\hat S$, we must have $\hat D_{k}\leqslant H^y$. In other words, $[\varphi ,x]\not\in H$ implies $\hat D_{k}^x\leqslant H$. First assume that there is no $x$ with this property  whose image modulo $M_k$ is a nontrivial element of $S_1$. Then $[S_1,\varphi ]\leqslant \bar H$, where $\bar H=HM_k/M_{k}$. We obtain that modulo $M_{k}$, in the coordinates of $S_1\times\dots \times S_p$, all elements $(a,a^{-1},1,\dots)$ are in $\bar H$. (Henceforth in this paragraph dots denote $1$s.) Multiplying $(a,a^{-1},1,\dots)$ by $(b,b^{-1},1,\dots)$ and then by $((ab)^{-1},(ab),1,\dots)$  for some non-commuting elements $a,b\in S_1$ we obtain an element $(1,g,1,\dots)\in \bar H$ with $g\ne 1$. Then conjugating by $(1,a,a^{-1},\dots)\in \bar H$ we obtain $(1,S_2,1,\dots)\leqslant \bar H$. Here the element $(1,a,a^{-1},\dots)$ belongs to $\bar H$ because $(1,a,a^{-1},\dots)=(a,a^{-1},1,\dots)^{\varphi}\in H^{\varphi}=H$, as  $\varphi\in H$; also recall that $|\varphi |>2$. From $(1,S_2,1,\dots)\leqslant \bar H$ by the action of $\varphi$  we obtain $S\leqslant \bar H$.

Thus we can assume that there is $x\in \hat S$ whose image modulo $M_k$ is a nontrivial
element of $S_1$ such that $[\varphi ,x]\not\in H$ and therefore, $\hat D_{k}^x\leqslant H$. This means that modulo $M_{k}$, in the coordinates of $S_1\times\dots \times S_p$, all elements $(b^x,b,b,\dots)$ lie in $\bar H$ and there is $b$ with $b^x\ne b$. Using the action of $\varphi$ we obtain $(b,b^x,b,\dots)\in \bar H$, whence $(c,c^{-1},1,\dots)\in \bar H$ for $c=[b,x]\ne 1$. Conjugating by $(b,b,b^x,\dots)$ we obtain $(c^b,(c^{-1})^b,1,\dots)\in \bar H$ for any~$b$. We can choose $b_1,b_2\in S_1$ such that
$[c^{b_1},c^{b_2}]\ne 1$ and therefore also $[(c^{-1})^{b_1},c^{b_2}]\ne 1$. We conjugate $(c^{b_1},(c^{-1})^{b_1},1,\dots)$ by $(1,c^{b_2},\dots)$ and obtain $(c^{b_1},((c^{-1})^{b_1})^{c^{b_2}},1,\dots)\in \bar H$. Multiplying by the inverse of $(c^{b_1},(c^{-1})^{b_1},1,\dots)$ we obtain  $(1,g,1,1,\dots)\in \bar H$ with $g\ne 1$. Conjugating by $(b^x,b,b,\dots)$ for all $b$ we again  obtain $S_2\leqslant \bar H$, and then $S\leqslant \bar H$, completing the proof of \eqref{e-cover}.

Now consider $H\cap \hat S$, which is a $\varphi$-invariant subgroup. By \eqref{e-cover} above, $\hat S \leqslant HM_k$; we claim that also  $\hat S\leqslant (H\cap \hat S)M_k$. For any $s\in \hat S$ we have $s=ha$ for $h\in H$ and $a\in M_k$. But $H\leqslant \hat S\langle\varphi\rangle$, so $h=s_1\varphi ^i$ for some $i$ and $s_1\in \hat S$, 
and then $s=s_1\varphi ^ia$. Clearly, $\varphi ^i=1$, so $s=s_1a$, where $s_1 =h\in H\cap \hat S$. Finally,   $H\cap \hat S$ is a proper subgroup of $\hat S$, since $d\not\in H$.  This contradicts the choice of $\hat S$ as a minimal $\varphi$-invariant
subgroup  such that
   $\hat SM_k/M_k=S$.
\end{proof}

\begin{proof}[Proof of Proposition~\ref{p-p}] The idea is to construct a chain of subgroups $\hat D_{m-1}, \dots , \hat D_1$, working from the second highest quotient in \eqref{e-riad} downwards,  in such a way that each of them  `marks' a strictly higher quotient among the $\lambda _i/\mu _i$ in the series \eqref{e-rc} of  $C_G(\varphi )$ than the next one with smaller index.

The proposition is obviously correct if $m\leqslant1$. Therefore we assume that $m\geqslant2$. By Lemma \ref{l-action} the automorphism $\varphi$ has a nontrivial orbit on the set of subnormal factors of $L_{m-1}/M_{m-1}$. If $m\geqslant 3$, let $\hat D_{m-1}\leqslant C_G(\varphi )$ be a subgroup of level $m-1$ constructed  by Lemma \ref{l-acts} in such a way that $\hat D_{m-1}$ does not stabilize some nontrivial $\varphi$-orbit on $L_{m-2}/M_{m-2}$. Let $D_{m-2}$ be the diagonal in one of these orbits  which is not stabilized by $\hat D_{m-1}$. Then  let $\hat D_{m-2}\leqslant C_G(\varphi )$  be a subgroup of level $m-2$ corresponding to $D_{m-2}$ by Lemma~\ref{l-acts} that does not stabilize some nontrivial $\varphi$-orbit on $L_{m-3}/M_{m-3}$. Continuing in this manner, we obtain a sequence of subgroups $\hat D_{m-1},\dots, \hat D_1$ such that for every $k$ the subgroup $\hat D_{k}$ is constructed by Lemma~\ref{l-acts} from the diagonal $D_k$ of a nontrivial $\varphi$-orbit on the subnormal factors of $L_{k}/M_{k}$, which is not stabilized by $\hat D_{k+1}$. Therefore for every $k\geqslant 2$ there exists  $x\in\hat D_{k}$ such that $D_{k-1}^x$ is the `diagonal' in a different orbit from the orbit where $ D_{k-1}$ is the `diagonal'. Then of course,
  \begin{equation}\label{ec}
    [\hat D_{k-1},\hat D_{k-1}^x]\leqslant M_{k-1} .
\end{equation}

We claim that $\hat D_{k}$ is not contained in $\mu_kM_k$ for $k=1,\dots,m-1$. Clearly,  $\hat D_1\not\leqslant\mu_1M_1$, since $\hat D_1$ is not soluble. By induction suppose that $\hat D_{k-1}\not\leqslant\mu_{k-1}M_{k-1}$. Let $j$ be the index given by Lemma~\ref{l-pro} such that $\hat D_{k-1}\leqslant\lambda_{j}M_{k-1}$ and $\hat D_{k-1}\not\leqslant \mu_{j}M_{k-1}$. Clearly,  $k-1\leqslant j$. Let $N=C_G(\varphi )\cap M_{k-1}$; then also
\begin{equation}\label{e1}
\hat D_{k-1}\not\leqslant \mu_{j}N.
\end{equation}

To perform the induction step we argue by contradiction and suppose that $\hat D_{k}\leqslant \mu_kM_k$. Since $\mu _k/\lambda _{k-1}$ is soluble and  $  \hat D_{k}M_{k}/M_{k}=D_k$ is non-abelian simple,   we must have $\hat D_{k}\leqslant \lambda_{k-1}M_k$.  Since $k-1\leqslant j$, it follows that $\hat D_{k}\leqslant \lambda_{j}M_k$ and therefore $\hat D_k\leqslant \lambda _j(M_k\cap C_G(\varphi ))$.

The group $C_G(\varphi )$ acts on the set of simple factors of the image of $\lambda_j/\mu_j $ in $C_G(\varphi )/N$.  By the above, the image $D_{k-1}$ of $\hat D_{k-1}$ is one of these factors. Clearly, $\lambda _j$ is in the kernel of this action by
Lemma~\ref{nn}. 
 It is easy to see that the image of $L_{k-1}\cap C_G(\varphi )$ stabilizes $D_{k-1}$ and therefore as a normal subgroup is in the kernel of the action on the orbit containing $D_{k-1}$. Since $M_k/L_{k-1}$ is soluble, we obtain that the image of $\lambda_j(M_k\cap C_G(\varphi ))$ in this action on the orbit containing $D_{k-1}$ is a soluble group. But $\hat D_k\leqslant \lambda_j(M_k\cap C_G(\varphi ))$, so the image of $\hat D_k$ is soluble. By minimality this image is actually trivial. Then
  \eqref{ec} 
   implies $\hat D_{k-1}\leqslant \mu_jN$, in contradiction
   with~\eqref{e1}. 
   
Thus, indeed $\hat D_{k}$ is not contained in $\mu_kM_k$ for $k=1,\dots,m-1$.
In particular, $\hat D_{m-1}$ is not contained in $\mu_{m-1}M_{m-1}$ and therefore $\lambda_{m-1}\ne\mu_{m-1}$, whence $\lambda (C_G(\varphi ))\geqslant m-1$.
\end{proof}

We now complete the proof of Theorem~\ref{t3}. Actually we prove  by
induction on $\alpha 
 =\alpha (|A|)$ that  $\lambda (G)\leqslant 2^{\alpha }(\lambda (C_G(A))+1)-1$. Let $\lambda=\lambda (C_G(A))$ for brevity.  When $\alpha (|A|)=1$, that is, $A=\langle \varphi\rangle$ is of prime order, then  $\lambda ([G,\varphi ])\leqslant \lambda +1$ by Proposition~\ref{p-p}. Then $\lambda (G)\leqslant 2\lambda +1$, since $G=[G, \varphi ]C_G(\varphi )$ and $[G, \varphi ]$ is a normal subgroup. 
For  $\alpha (|A|)>1$, let
$A_0$ be a normal subgroup of prime index in $A$. Then $C_G(A_0)$ admits the group of automorphisms $A/A_0$ of prime order and $C_{C_G(A_0)} (A/A_0)=C_G(A)$. By the above, $\lambda (C_G(A_0)) \leqslant 2\lambda +1$. It remains to apply the induction hypothesis to $A_0$:
\begin{align*}
\lambda (G)&\leqslant 2^{\alpha -1}(\lambda (C_G(A_0))+1)-1\leqslant 2^{\alpha -1 }(2\lambda +1+1)-1\\&=2^{\alpha -1 }\cdot 2\lambda +2^{\alpha -1}\cdot 2-1=2^{\alpha }(\lambda +1)-1. \end{align*}
\end{proof}

\section*{Acknowledgement}
The authors thank the referee for careful reading and helpful comments.

\end{document}